\newtheorem{theorem}{Theorem}
\numberwithin{theorem}{section}
\numberwithin{subcase}{case}
\newtheorem{corollary}[theorem]{Corollary}
\newtheorem{definition}[theorem]{Definition}
\newtheorem{lemma}[theorem]{Lemma}
\newtheorem{question}[theorem]{Question}
\DeclareMathOperator{\cl}{cl}
\begin{document}

\title{Diagonals separating the square of a continuum}

\author[\tiny A. Illanes]{Alejandro Illanes}
\author[V. Mart\'inez-de-la-Vega]{Ver\'onica Mart\'inez-de-la-Vega}
\author[J. M. Mart\'inez-Montejano]{Jorge M. Mart\'{\i}nez-Montejano}
\author[D. Michalik]{Daria Michalik}

\address[A. Illanes]{Instituto de Matem\'{a}ticas, Universidad
Nacional Aut\'{o}noma de M\'{e}xico, Circuito Exterior, Cd. Universitaria,
Ciudad de M\'{e}xico, 04510, M\'{e}xico. ORCID 0000-0002-7109-4038.} 
\email{illanes@matem.unam.mx}

\address[V. Mart\'{\i}nez-de-la-Vega]{Instituto de Matem\'{a}ticas, Universidad Nacional Aut\'{o}noma de M\'{e}xico, Circuito Exterior, Cd. Universitaria, Ciudad de M\'{e}xico, 04510, M\'{e}xico. ORCID 0000-0002-1694-6847.} 
\email{vmvm@matem.unam.mx}

\address[J. M. Mart\'{\i}nez-Montejano]{Departamento de Matem\'{a}ticas, Facultad de Ciencias, Universidad Nacional Aut\'{o}noma de M\'{e}xico, Circuito Exterior, Cd. Universitaria,  Ciudad de M\'{e}xico, 04510, M\'{e}xico. ORCID 0000-0001-5859-8728.} 
\email{jorgemm@ciencias.unam.mx}

\address[D. Michalik]{Jan Kochanowski University, \'Swi\c etokrzyska 15, 25-406, Kielce, Poland. ORCID 0000-0001-9524-5403.} 
\email{daria.michalik@ujk.edu.pl}

\subjclass[2020]{Primary 54F15, 37B02; Secondary 54B10, 37B45, 54F50.}

\keywords{Continuum; continuumwise connected; diagonal;  product; solenoid; weakly mixing.}

\let\thefootnote\relax\footnote{This paper was partially supported by the project ``Teor\'{\i}a de Continuos e Hiperespacios, dos" (AI-S-15492) of CONACyT. Also, the work of the last author was supported by the National Science Centre, Poland, through the Grant Nr rej.: 2021/05/X/ST1/00357.}

\begin{abstract}
A metric continuum $X$ is indecomposable if it cannot be put as the union of two of its proper subcontinua. A subset $R$ of $X$ is said to be continuumwise connected provided that for each pair of points $p,q\in R$, there exists a subcontinuum $M$ of $X$ such that $\{p,q\}\subset M\subset R$. Let $X^{2}$ denote the Cartesian square of $X$ and $\Delta$ the diagonal of $X^{2}$. In \cite{ka} it was asked if for a continuum $X$, distinct from the arc, $X^{2}\setminus \Delta$ is continuumwise connected if and only if $X$ is decomposable. In this paper we show that no implication in this question holds. For the proof of the non-necessity, we use the dynamic properties of a suitable homeomorphism of the Cantor set onto itself to construct an appropriate indecomposable continuum $X$.
\end{abstract}

\maketitle

\section{Introduction}

A {\it continuum} is a compact connected non-degenerate metric space. A subcontinuum
of a continuum $X$ is a nonempty connected closed subset of $X$, so singletons
are subcontinua.
An {\it arc} is a continuum homeomorphic to the interval $[0,1]$. Let $X^{2}$ denote the Cartesian square of $X$ and $\Delta$ the diagonal
of $X^{2}$.

The authors studied in \cite{immm} conditions under which $\Delta$ satisfies some of the properties described in \cite[Table 1]{bpv} and \cite[Definition 1.1]{mm}, for a number of examples and families of continua.

Recently, H. Katsuura \cite{ka} proved that the arc is the only continuum for which $\Delta$ does not satisfy that $X^{2}\setminus \Delta$  is connected, and he included the following question \cite[p. 4]{ka} (Katsuura mentioned that this question was suggested by Wayne Lewis of Texas Tech University in a private conversation):
\begin{question}\label{dos}
If $X$ is a continuum other than the arc, is $X^{2}\setminus \Delta$ continuumwise connected if and only if $X$ is decomposable?
\end{question}
%Question \ref{dos}, can be rephrased as: If $X$ is a continuum other than the arc, is $\Delta$ not a weak cut  continuum of $X^{2}$ if and only if $X$ is decomposable?

As an application of Dynamic Systems to Continuum Theory, we use the dynamic properties of a particular homeomorphism from the Cantor set onto itself to prove that the necessity in Question 2 is not satisfied. In fact, we show that no implication in Question \ref{dos} is satisfied.

\section{No Sufficiency}
A {\it map} is a continuous function. Given continua $X$ and $Y$, and a number $\varepsilon>0$, an $\varepsilon$-{\it map} is an onto map $f:X\rightarrow Y$ such that  for each $y\in Y$, diameter($f^{-1}(y))<\varepsilon$.
The continuum $X$ is  {\it arc-like} provided that for each $\varepsilon>0$, there exists an $\varepsilon$-map $f:X\rightarrow [0,1]$. 

\begin{lemma}
\label{tres}
 Let $X$ be an arc-like continuum and $p,q\in X$ such that $p\neq q$. Then,
for every continuum $K\subset X^2$ containing the points $(p,q)$ and $(q,p)$,
$K\cap\Delta\neq\emptyset$. 
\end{lemma}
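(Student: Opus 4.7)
The plan is to use the arc-likeness of $X$ to transport the problem into $[0,1]^{2}$, where the diagonal is known to separate the square, and then pass to a limit using compactness.

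First, since $p\neq q$, choose $\varepsilon_{n}\to 0$ with $\varepsilon_{n}<d(p,q)$ for every $n$, and for each $n$ let $f_{n}\colon X\to[0,1]$ be an $\varepsilon_{n}$-map guaranteed by the definition of arc-like. Because each fiber of $f_{n}$ has diameter less than $d(p,q)$, we get $f_{n}(p)\neq f_{n}(q)$.

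Next, I consider the product map $g_{n}=f_{n}\times f_{n}\colon X^{2}\to[0,1]^{2}$. Set $K_{n}=g_{n}(K)$. Then $K_{n}$ is a continuum (as the continuous image of one), and it contains the two points $(f_{n}(p),f_{n}(q))$ and $(f_{n}(q),f_{n}(p))$. These two points are reflections of each other across the diagonal $\Delta_{[0,1]^{2}}$ of $[0,1]^{2}$, and since $f_{n}(p)\neq f_{n}(q)$ they lie in the two different (open) components of $[0,1]^{2}\setminus\Delta_{[0,1]^{2}}$. By connectedness of $K_{n}$, there must exist a point $(a_{n},b_{n})\in K$ with $g_{n}(a_{n},b_{n})\in\Delta_{[0,1]^{2}}$, i.e.\ $f_{n}(a_{n})=f_{n}(b_{n})$. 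Because $f_{n}$ is an $\varepsilon_{n}$-map, this forces $d(a_{n},b_{n})<\varepsilon_{n}$.

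Finally, since $K$ is compact, the sequence $\{(a_{n},b_{n})\}\subset K$ has a subsequence converging to some $(a,b)\in K$, and the estimate $d(a_{n},b_{n})<\varepsilon_{n}\to 0$ forces $a=b$. Hence $(a,a)\in K\cap\Delta$, as desired.

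The only step requiring a little care is the assertion that the diagonal of the square separates $[0,1]^{2}$ into the two open triangles; this is the essential geometric input, and is what fails when one attempts the same argument for an arbitrary continuum in place of the arc. Everything else is routine (arc-likeness furnishes the $\varepsilon$-maps, continuity gives that $g_{n}(K)$ is a continuum, and compactness of $K$ supplies the limit point).
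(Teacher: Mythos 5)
Your proof is correct and rests on exactly the same geometric input as the paper's: an $\varepsilon$-map to $[0,1]$ sends the two points $(p,q)$ and $(q,p)$ to opposite sides of the diagonal of $[0,1]^{2}$, which separates the square. The only difference is organizational --- the paper argues by contradiction with a single $\varepsilon$-map (if $K\cap\Delta=\emptyset$, compactness gives a uniform $\varepsilon$, and then $\lambda(x)-\lambda(y)$ has constant sign on $K$), whereas you run the argument directly with a sequence $\varepsilon_{n}\to 0$ and extract a limit point; both are fine.
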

\begin{proof}
Let $d$ be a metric for $X$. Let $K$ be a continuum in $X^2$ such that $(p,q),(q,p)\in K$, for some points
$p\neq q\in X$. 
 
Suppose that $K\cap\Delta = \emptyset$. Hence, there exists $\varepsilon>0$ such
that $d(x,y)>\varepsilon$, for every point $(x,y)\in K$. Since $X$ is an
arc-like continuum, there exists an $\varepsilon$-map $\lambda:X  \rightarrow
[0,1]$. Then for each $(x,y)\in K$, $\lambda(x)\neq \lambda(y)$. By the connectedness
of $K$, we obtain that either $\lambda(x)<\lambda(y)$ for each $(x,y)\in
K$ or $\lambda(y)<\lambda(x)$ for each $(x,y)\in K$. This contradicts the
fact that $(p,q),(q,p)\in K$. 
\end{proof}

\begin{corollary}\label{cuatro}
 \label{cuatro} Let $X$ be an arc-like continuum. Then $X^{2}\setminus\Delta$ is not continuumwise connected.
\end{corollary}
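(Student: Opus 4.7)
The corollary is an essentially immediate consequence of Lemma \ref{tres}, so the plan is just to unpack the definition of continuumwise connectedness and apply the lemma to a single well-chosen pair of points.

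First, I would pick any two distinct points $p, q \in X$; such points exist because $X$ is a continuum (non-degenerate). Then the points $(p,q)$ and $(q,p)$ both lie in $X^{2}\setminus\Delta$ since $p \neq q$. So to witness the failure of continuumwise connectedness, it suffices to show that these two points cannot be joined by a subcontinuum of $X^{2}\setminus\Delta$.

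Next, I would argue by contradiction: suppose there exists a subcontinuum $M$ of $X^{2}$ with $\{(p,q),(q,p)\} \subset M \subset X^{2}\setminus \Delta$. Since $M \subset X^{2}\setminus\Delta$, we have $M \cap \Delta = \emptyset$. But Lemma \ref{tres} applied to $K = M$ yields $M \cap \Delta \neq \emptyset$, a contradiction. Therefore no such subcontinuum exists, and $X^{2}\setminus\Delta$ fails to be continuumwise connected.

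There is essentially no main obstacle here; the work has already been done in Lemma \ref{tres}, and the corollary is a direct translation of its conclusion into the language of the continuumwise connectedness definition given in the abstract.
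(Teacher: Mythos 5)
Your proposal is correct and is exactly the argument the paper intends (the paper states the corollary without proof as an immediate consequence of Lemma \ref{tres}): choosing any distinct $p,q\in X$, the points $(p,q)$ and $(q,p)$ lie in $X^{2}\setminus\Delta$, and by the lemma no subcontinuum of $X^{2}$ containing both can miss $\Delta$, so they cannot be joined within $X^{2}\setminus\Delta$. No gaps.
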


Since there are decomposable arc-like continua (for example, the sin$(\frac{1}{x})$-curve), Corollary \ref{cuatro}, shows that the sufficiency in Question \ref{dos} is not satisfied. We also prove in \cite[Section 7]{immm} that, in fact the sin$(\frac{1}{x})$-curve belongs to a family of curves for which $\Delta$ satisfies a stronger property in $X^{2}$. S. B. Nadler, Jr. \cite[p. 329]{na2} called {\it Elsa continua} to the compactifications
of the ray $[0,\infty)$ whose remainder is an arc and he proved that they
are arc-like \cite[Lemma 6]{na1} (in fact, it is easy to show that a compactification
of $[0,\infty)$ with non-degenerate remainder,  is arc-like if and only
if the remainder is an arc-like continuum). Then for each Elsa continuum, $X$ is deconposable and $X^{2}\setminus \Delta$ is not continuumwise connected. 

%\begin{proposition}
%\label{cinco}
%Let $X$ be an Elsa continuum. Then $\Delta$ is a non-cut continuum in $X^{2}$, %but it is a strong center in $X^{2}$.
%\end{proposition}

%\begin{proof}
%Suppose that $X=R\cup A$, where $R$ is a ray ($R$ is homeomorphic to $[0,\infty)$) and $A$ is an arc and it is the remainder of $X$. By \cite[Theorem 3]{ka}, $\Delta$ is non-cut in $X^2$. 

%Let $I$ and $J$ be disjoint arcs in the ray $R\subset X$. Let
%$E=I\times J$ and $F=J\times I$. Then $E$ and $F$ have nonempty interior %in $X^{2}$. Let $M$ be a subcontinuum of $X^2$ such that
%$M\cap E\neq\emptyset\neq M\cap F$. Let $K=M\cup E\cup F$ and fix points %$p\in I$ and $q\in J$. Then $K$ is a subcontinuum of $X^{2}$ such that $(p,q),(q,p)\in K$. By Lemma \ref{tres}, there exists a point $z=(x,x)\in K\cap \Delta$.
%Since $(x,x)\notin E\cup F$, we have that $z\in M\cap \Delta$. We have shown that there is no a continuum in $X^{2}\setminus \Delta$ intersecting both open sets int$_{X^{2}}(E)$ and int$_{X^{2}}(F)$. Therefore $\Delta$ is a strong center in $X^2$.
%\end{proof}

\section{No Necessity}
In this section we present an indecomposable continuum $X$ such that $X^{2}\setminus \Delta$ is continuumwise connected.
%is colocally connected in $X^{2}$.
\begin{definition}\label{uno}

Let $X$ be a continuum and $A$ a subcontinuum of X
with int$_{X} (A) =\emptyset$. We say that $A$ is
a continuum of {\rm colocal connectedness} in $X$ if for each
open subset $U$ of $X$ with $A\subset U$ there exists an open subset $V$ of $X$ such that $A\subset
V \subset U$ and $X\setminus V$ is connected.
\end{definition}

By  \cite[Table 1]{bpv} and \cite[Section 1]{mm} it is known that if $\Delta$ satisfies  Definition \ref{uno} in $X^{2}$ then $X^{2}\setminus \Delta $ is continuumwise connected.

The construction and proof of the properties of $X$ strongly depends on the dynamic properties of a particular homeomorphism of the Cantor set onto itself.

Let $C$ denote the Cantor ternary set. In this section we will use a homeomorphism $f:C\rightarrow C$ such that $f$ is minimal ($C$ does not contain any proper nonempty closed subset $A$ such that $f(A)=A$, or equivalently, every orbit of $f$ is dense) and $f$ is weakly mixing (for any two nonempty subsets $U,V$ of $X^{2}$, there exist $n\in \mathbb {N}$ such that $(f\times f)^{n}(U)\cap V\neq\emptyset$). A recent reference for the existence of such homeomorphisms is \cite[Theorem 4.1]{bko}. It is known that the inverse of $f^{-1}$ is also minimal \cite[Theorem 6.2 (e)]{ac}. By \cite[Proposition 2]{dk}, $f\times f$ has a dense orbit.

We consider the space $X$ obtained by identifying in $C\times [0,1]$, for each $p\in C$, the points $(p,1)$ and $(f(p),0)$. Let $\varphi:C\times [0,1]\rightarrow X$ be the quotient mapping and let $\sigma=\varphi\times\varphi:(C\times[0,1])^{2}\rightarrow X^{2}$. Let $\rho$ be the metric on $C\times [0,1]$ given by $\rho((p,s),(q,t))=\vert p-q\vert+\vert s-t\vert)$. Fix a metric $D$ for the space $X$.

In the hypothesis of the following theorem, we write the specific properties that we use of the homeomorphism $f$.

\begin{theorem}
Suppose that $f:C\rightarrow C$ is a homeomorphism such that the orbits of $f$ and $f^{-1}$ are dense and $f\times f:C^{2}\rightarrow C^{2}$ has a dense orbit. Then $X$ is an indecomposable continuum such that the diagonal $\Delta$ in $X^{2}$ is colocally connected.  
\end{theorem}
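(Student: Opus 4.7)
The plan is to verify that $X$ is a continuum, then to establish indecomposability, and finally the colocal connectedness of $\Delta$ in $X^{2}$. Since $\varphi$ is a quotient of the compact metric space $C\times[0,1]$ by the closed equivalence relation identifying only the pairs $(p,1)\sim(f(p),0)$, $X$ is compact, Hausdorff and metrizable; it is non-degenerate. For connectedness, fix $p\in C$: the set
\[
R_{p}=\varphi\!\left(\bigcup_{n\in\mathbb{Z}}\{f^{n}(p)\}\times[0,1]\right)
\]
is arcwise connected because consecutive arcs meet at the identified endpoints $\varphi(f^{n}(p),1)=\varphi(f^{n+1}(p),0)$, and $R_{p}$ is dense in $X$ by minimality of $f$; hence $X=\overline{R_{p}}$ is connected.

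For indecomposability, it suffices to prove that every subcontinuum $K\subseteq X$ with $\mathrm{int}(K)\ne\emptyset$ equals $X$. Such a $K$ contains a basic open set $\varphi(U\times J)$ with $U\subset C$ clopen and $J\subset(0,1)$ an open interval. Let $\Phi_{t}$ denote the suspension flow on $X$, so that $\Phi_{n}(\varphi(p,s))=\varphi(f^{n}(p),s)$ for integer $n$ and $s\in[0,1)$; by minimality of $f$ there is an $N$ with $\bigcup_{|n|\le N}f^{n}(U)=C$, and therefore $\Phi_{[-N,N]}(K)=X$. The crucial step is to upgrade this to $K=X$. I would analyse the fibers $K_{t}=\{p\in C:\varphi(p,t)\in K\}$ of $K$ over $t\in[0,1]$, showing via the connectedness of $K$ and the minimality of $f$ and $f^{-1}$ that $K_{t}=C$ for every $t$. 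The main obstacle is to exclude the possibility that $K_{t}$ is a proper clopen subset of $C$ for some $t$; such a clopen gap, via the bidirectional flow and minimality, would propagate to induce a clopen decomposition of $K$ itself, contradicting connectedness.

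For the colocal connectedness of $\Delta$ in $X^{2}$, let $U\supset\Delta$ be open and choose $\eta>0$ such that the open tube $V=\{(x,y)\in X^{2}:D(x,y)<\eta\}$ is contained in $U$; it suffices to prove $X^{2}\setminus V$ is connected. Fix $(p_{0},q_{0})\in C^{2}$ with dense $(f\times f)$-orbit; this orbit lies in $C^{2}\setminus\Delta_{C}$ because $\Delta_{C}$ is $(f\times f)$-invariant. The image under the diagonal suspension flow $(\Phi_{s},\Phi_{s})$ of the point $(\varphi(p_{0},0),\varphi(q_{0},0))$ is a curve $\gamma\subset X^{2}\setminus\Delta$ whose closure contains the equal-heights slab $\{(\varphi(p,s),\varphi(q,s)):p,q\in C,\,s\in[0,1)\}$ of $X^{2}$. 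I would prove $X^{2}\setminus V$ connected by joining any two of its points by a continuum built from subarcs of $\gamma$, short two-parameter flow segments $(\Phi_{s},\Phi_{t})$, and arcs in $X$ obtained by flowing one coordinate alone. The principal difficulty is that $\gamma$ itself enters $V$ infinitely often: because the $(f\times f)$-orbit of $(p_{0},q_{0})$ is dense in $C^{2}$ it approaches $\Delta_{C}$ arbitrarily closely, so $\gamma\setminus V$ is disconnected. The resolution uses the two-parameter flow structure of $X^{2}$: whenever $\gamma$ is about to enter $V$, one deviates via a short single-coordinate flow segment that routes around $V$ through the Cantor-direction flexibility of $X^{2}$ and rejoins $\gamma$ on the other side. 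Coordinating these detours to stay uniformly at distance $\ge\eta$ from $\Delta$ requires a compactness argument providing a uniform safe margin, exploiting the continuity of $\sigma$ and the separation of the $(f\times f)$-orbit from $\Delta_{C}$.
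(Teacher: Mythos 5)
There is a genuine gap, in fact two, and both occur exactly at the points you flag as ``the crucial step'' and ``the principal difficulty'' without resolving them. For indecomposability, your plan (show every subcontinuum with nonempty interior is all of $X$) is a correct strategy, but the step that does all the work --- ruling out proper fibers $K_{t}$ and upgrading $\Phi_{[-N,N]}(K)=X$ to $K=X$ --- is left as a declaration of intent; note also that the fibers $K_{t}$ are merely closed, not clopen, so the proposed ``clopen gap propagates to a clopen decomposition of $K$'' argument does not obviously get off the ground. The paper sidesteps this entirely by citing Gutek's theorem on generalized solenoids, which is precisely the statement that this quotient is indecomposable when the orbits of $f$ and $f^{-1}$ are dense.

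The more serious gap is in the colocal connectedness argument. Your connecting skeleton is the diagonal-flow orbit $\gamma$ of $(\varphi(p_{0},0),\varphi(q_{0},0))$, whose closure is the \emph{equal-heights} slab; since the $(f\times f)$-orbit of $(p_{0},q_{0})$ is dense in $C^{2}$, this skeleton accumulates on $\Delta$ and $\gamma\setminus V$ is badly disconnected, as you observe. The proposed repair --- ad hoc detours ``through the Cantor-direction flexibility'' rejoining $\gamma$ on the other side, with a compactness argument supplying a uniform safe margin --- is never constructed, and it is not clear it can be: each detour must avoid a fixed neighborhood of $\Delta$ while both endpoints of the detour are themselves within $\eta$ of $\Delta$. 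The paper's key idea, which your proposal is missing, is to offset the two height coordinates: the skeleton is $M=(C\times\{0\})\times(C\times\{\tfrac{1}{2}\})$, which sits at uniform distance from the diagonal, and consecutive points $\sigma((f^{n}(p_{0}),0),(f^{n}(q_{0}),\tfrac{1}{2}))$ of the dense $f\times f$-orbit are joined inside the complement by explicit broken flow segments (Claim 4), while every other point of the complement is joined to $\sigma(M)$ by explicit vertical segments (Claims 1--3). A second, smaller, divergence: the paper does not try to show that the metric tube $\{D(x,y)<\eta\}$ has connected complement; it builds the open set $V$ as the complement of an explicitly defined compact set $\sigma(K)$, where the excluded region $V_{0}=V_{1}\cup V_{2}\cup V_{3}$ is described upstairs in $(C\times[0,1])^{2}$ so as to account for the identification $(p,1)\sim(f(p),0)$. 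Colocal connectedness only requires \emph{some} such $V$, and committing to the metric tube makes your task strictly harder for no gain.
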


\begin{proof} The indecomposability of $X$ is proved in \cite[Corollary, p. 552]{gu}. So we only need to show that  
$\Delta$ is colocally connected in $X^{2}$. In order to do this, take an open subset $U$ in $X^{2}$ such that $\Delta\subset U$. Then there exists $\varepsilon>0$ such that if $D(x,y)<\varepsilon$, then $(x,y)\in U$. Fix $\delta>0$ such that $\delta<\frac{1}{10}$ and if $(p,s),(q,t)\in C\times[0,1]$ and $\rho((p,s),(q,t))<2\delta$, then $D(\varphi(p,s),\varphi(q,t))<\frac{\varepsilon}{3}$ (hence $\sigma((p,s),(q,t))\in U$).

Define
$$V_{1}=\{((p,s),(q,t))\in (C\times[0,1])^{2}:\vert p-q\vert<\delta\text{ and }\vert t-s \vert<\delta\},$$ 
$$V_{2}=\{((p,s),(q,t))\in (C\times[0,1])^{2}:\vert p-f(q)\vert<\delta, s<\delta\text{ and }1-\delta<t\},$$
$$V_{3}=\{((p,s),(q,t))\in (C\times[0,1])^{2}:\vert f(p)-q\vert<\delta, t<\delta\text{ and }1-\delta<s\},$$
$$V_{0}=V_{1}\cup V_{2}\cup V_{3}$$
and
$$K=(C\times[0,1])^{2}\setminus V_{0}.$$

We check that $K$ has the following properties.

(a) $\sigma(K)$ is a continuum, and

(b) $\Delta\subset X^{2}\setminus \sigma(K)\subset U$.
 
In order to prove (a), observe that $V_{0}$ is open in $(C\times[0,1])^{2}$, so $K$ and $\sigma(K)$ are compact, so we only need to show that $\sigma(K)$ is connected. Let 
\begin{center}
$M=(C\times\{0\})\times(C\times\{\frac{1}{2}\})$. \end{center}

Notice that $M\subset K$. 

{\bf Claim 1.} Let $z=((p,s),(q,t))\in K$ be such that $s\leq t$. Then there exists a subcontinuum $A$ of $K$ such that $z,((p,0),(q,\frac{1}{2}))\in A$. 

We prove Claim 1. We consider two cases.

{\bf Case 1.} $2\delta\leq s$ or $t\leq 1-\delta$. 

For each $r\in [0,1]$, let $u(r)=rs$ and $v(r)=rt+(1-r)(t-s)$. Define $\lambda(r)=((p,u(r)),(q,v(r)))$ and $A_{1}=\{\lambda(r):r\in[0,1]\}$. Then $z,((p,0),(q,t-s))\in A_{1}$. Take $r\in [0,1]$, observe that $u(r)\leq v(r)\leq t$ and $v(r)-u(r)=t-s$. Since $u(r)\leq v(r)$ and $\delta<1-\delta$, we have that $\lambda(r)\notin V_{3}$. Since $v(r)-u(r)=t-s$ and $z\notin V_{1}$, we have that $\lambda(r)\notin V_{1}$.

In the case that $t\leq 1-\delta$, we have that $v(r)\leq 1-\delta$, Hence $\lambda(r)\notin V_{2}$. 

Now we consider the case that $2\delta\leq s$. If $r\leq \frac{1}{2}$, then $\delta\leq(1-r)2\delta\leq (1-r)s$, so $t\leq1\leq 1-\delta +(1-r)s$. This implies that $v(r)=rt+(1-r)(t-s)\leq 1-\delta$. Thus $\lambda(r)\notin V_{2}$. If $\frac{1}{2}\leq r$, then $\delta\leq\frac{s}{2}\leq rs=u(r)$. Hence $\lambda(r)\notin V_{2}$.

This completes the proof that for each $r\in [0,1]$, $\lambda(r)\in K$. Therefore $A_{1}\subset K$.

 Given $r\in [0,1]$, let $w(r)=r(t-s)+(1-r)\frac{1}{2}$ and $\eta(r)=((p,0),(q,w(r)))$. Let
 $A_{2}=
\{\eta(r):r\in[0,1]\}$. Observe that $((p,0),(q,t-s)), ((p,0),(q,\frac{1}{2}))\in A_{2}$ and $\eta(r)\notin V_{3}$.

Take $r\in [0,1]$. Since $\lambda(0)\in K$, we have that $((p,0),(q,t-s))\notin V_{1}\cup V_{2}$. Since $((p,0),(q,t-s))\notin V_{1}$, we have that either $\delta \leq \vert p-q\vert$ or $\delta\leq t-s$. If $\delta \leq \vert p-q\vert$, it is clear that $\eta(r)\notin V_{1}$. If $\delta\leq t-s$, since $\delta \leq\frac{1}{2}$, we have that $\delta \leq w(r)$. Thus $\eta(r)\notin V_{1}$. Since $((p,0),(q,t-s))\notin V_{2}$, we have that either
$\delta \leq \vert  p-f(q)\vert$ or $t\leq 1-\delta$. If $\delta \leq \vert p-f(q)\vert$,
it is clear that $\eta(r)\notin V_{2}$. If $t\leq 1-\delta$, since $\frac{1}{2}\leq 1-\delta$, we have that $w(r)\leq 1-\delta$. Thus $\eta(r)\notin V_{2}$. We have shown that for each $r\in [0,1]$, $\eta(r)\notin V_{1}\cup V_{2}\cup V_{3}$. Hence $A_{2}\subset K$. Define $A=A_{1}\cup A_{2}$. Then $A$ has the required properties.

{\bf Case 2.} $s<2\delta$ and $1-\delta<t$.

Given $r\in [0,1]$, let $y(r)=r(1-\delta)+(1-r)t$ and $\gamma(r)=((p,s),(q,y(r)))$. Let $A_{3}=\{\gamma(r):r\in [0,1]\}$. Notice that $z,((p,s),(q,1-\delta))\in A_{3}$ and for each $r\in [0,1]$, $1-\delta\leq y(r)$, so $\gamma(r)\notin V_{3}$ and $\delta<\frac{7}{10}\leq y(r)-s$. Thus $\gamma(r)\notin V_{1}$. Since $z\notin V_{2}$ and $1-\delta < t$, we have that either $\delta \leq \vert p-f(q)\vert$ or $\delta \leq s$, in both cases it is clear that $\gamma(r)\notin V_{2}$. This completes the proof that for each $r\in [0,1]$, $\gamma(r)\in K$. Therefore $A_{3}\subset K$.

We apply Case 1 to the point $z_{0}=((p,s),(q,1-\delta))$, so there exists a subcontinuum $A_{4}$ of $K$ such that $z_{0},((p,0),(q,\frac{1}{2}))\in A_{4}$. Define $A=A_{3}\cup A_{4}$. Then $A$ has the required properties.  

This finishes the proof of Claim 1.

By the symmetry of the roles of both coordinates in the definition of $V_{0}$, we obtain that the following claim also holds.

{\bf Claim 2.} Let $z=((p,s),(q,t))\in K$ be such that $t\leq s$. Then there
exists a subcontinuum $A$ of $K$ such that $z,((p,\frac{1}{2}),(q,0))\in A$. 

{\bf Claim 3.} Let $z=((p,s),(q,t))\in K$. Then there exists a subcontinuum $B$ of $\sigma(K)$ such that $\sigma(z)\in B$ and $B\cap \sigma(M)\neq \emptyset$. 

The proof for the case $s\leq t$ follows from Claim 1. So  we may suppose that $t\leq s$. By Claim 2, there exists a subcontinuum $A_{4}$ of $K$ such that $z,((p,\frac{1}{2}),(q,0))\in
A_{4}$. Let $A_{5}=\{((p,\frac{1}{2}+r),(q,r)):r\in [0,\frac{1}{2}]\}$. Clearly, $((p,\frac{1}{2}),(q,0)),((p,1),(q,\frac{1}{2}))\in
A_{5}$ and $A_{5}\subset K$. Let $A=A_{4}\cup A_{5}$. Then $A$ is a subcontinuum of $K$, $z\in A$ and $\sigma((p,1),(q,\frac{1}{2}))=\sigma((f(p),0),(q,\frac{1}{2}))\in \sigma(A)\cap\sigma(M)$. Hence $B=\sigma(A)$ satisfies the required properties.

{\bf Claim 4.} Let $z=((p,0),(q,\frac{1}{2}))\in M$. Then there exists a subcontinuum $E$ of $\sigma(K)$ such that $\sigma(z),\sigma((f(p),0),(f(q),\frac{1}{2}))\in E$. 

In order to prove the existence of $E$, let $A=\{((p,r),(q,\frac{1}{2}+r)):r\in [0,\frac{1}{2}]\}\cup\{((p,\frac{1}{2}+r),(f(q),r)):r\in[0,\frac{1}{2}]\}$. Since $\varphi(q,1)=\varphi(f(q),0)$ and $\varphi(p,1)=\varphi(f(p),0)$, we obtain that $E=\sigma(A)$ satisfies the required properties. Hence, Claim 4 is proved. 

Fix a point $(p_{0},q_{0})\in C^{2}$ such that $(p_{0},q_{0})$ has a dense orbit under $f\times f$.
By Claim 4, there exists a sequence $B_{1},B_{2},\ldots$ of subcontinua of $\sigma(K)$ such that for each $n\in \mathbb{N}$, $\sigma((f^{n-1}(p_{0}),0),(f^{n-1}(q_{0}),\frac{1}{2})),\sigma((f^{n}(p_{0}),0),(f^{n}(q_{0}),\frac{1}{2}))\in
B_{n}$. Then the set $B=B_{1}\cup B_{2}\cup\cdots$ is a connected subset of $\sigma(K)$ and $\sigma((p_{0},0),(q_{0},\frac{1}{2}))\in
B$.

Given a point $z=((p,0),(q,\frac{1}{2}))\in M$, since $(p_{0},q_{0})$ has a dense orbit under $f\times f$, we have that $\sigma(z)\in\sigma(\cl_{X^{2}}(\{((f^{n}(p_{0}),0),(f^{n}(q_{0}),\frac{1}{2})):n\in\mathbb{N}\}))\subset \cl_{X^{2}}(B)$. This proves that $B\cup \sigma(M)$ is a connected subset of $\sigma(K)$. Hence, Claim 3 implies that $\sigma(K)$ is connected. This ends the proof of (a).

In order to prove (b), take $\sigma(z)\in \Delta$, where $z=((p,s),(q,t))$. Then $\varphi(p,s)=\varphi(q,t)$. Thus either $(p,s)=(q,t)$ or ($t=1$ and $(p,s)=(f(q),0))$ or ($s=1$ and $(q,t)=(f(p),0)$). In the first case, $z\in V_{1}$, in the second $z\in V_{2}$ and in the third one, $z\in V_{3}$. In any case, $z\notin K$. We have shown that $\Delta\subset X^{2}\setminus \sigma(K)$.

Now we prove that $X^{2}\setminus \sigma(K)\subset U$. Take an element $w=\sigma(z)$, where $z=((p,s),(q,t))\notin K$. Then $z\in V_{1}\cup V_{2}\cup V_{3}$. We consider three cases. 

{\bf Case 1.} $z\in V_{1}$.

In this case, $\vert p-q\vert<\delta$ and $\vert t-s\vert<\delta$, and by the definition of $\delta$, $\sigma(z)\in U$.

{\bf Case 2.} $z\in V_{2}$.

In this case, $\vert p-f(q)\vert<\delta, s<\delta$
and $1-\delta<t$. Then $\rho((p,s),(p,0))<\delta$, $\rho((p,0),(f(q),0))<\delta$ and $\rho((q,1),(q,t))<\delta$. So, $D(\varphi(p,s),\varphi(p,0))<\frac{\varepsilon}{3}$, $D(\varphi(p,0),\varphi(f(q),0))<\frac{\varepsilon}{3}$ and $D(\varphi(q,1),\varphi(q,t))<\frac{\varepsilon}{3}$. Since $\varphi(f(q),0)=\varphi(q,1)$, we have that $D(\varphi(p,s),\varphi(q,t))<\varepsilon$. By the choice of $\varepsilon$, we conclude that $\sigma(z)=(\varphi(p,s),\varphi(q,t))\in U$.

{\bf Case 3.} $z\in V_{3}$.

This case is similar to Case 2.
This completes the proof that $X^{2}\setminus \sigma(K)\subset U$.
Therefore (b) holds.

Finally define $V=X^{2}\setminus \sigma(K)$. Then $V$ is an open subset of $X^{2}$ such that $\Delta\subset V\subset U$, and $X^{2}\setminus V$ is connected. This finishes the proof that $\Delta$ is colocally connected in $X^{2}$.
\end{proof}

\bigskip

\end{document}